\newtheorem{lemma}{Lemma}
\newtheorem{theorem}{Theorem}
\newtheorem{corollary}{Corollary}
\newtheorem{proposition}{Proposition}
\newtheorem{conjecture}{Conjecture}[section]
\newtheorem{definition}{Definition}
\newtheorem{case}{Case}
\def\blfootnote{\xdef\@thefnmark{}\@footnotetext}\makeatother
\title[Remark on a result of Bourgain on Poissonian pair correlation]{\bf Remark on a result of Bourgain on Poissonian pair correlation}
\author{Gerhard Larcher} 
\address{ Institute of Financial Mathematics and applied Number Theory, University Linz}
\email{gerhard.larcher@jku.at}
\thanks{The author is supported by the Austrian Science Fund (FWF): Project F5507-N26, which is part of the Special Research Program "Quasi-Monte Carlo Methods: Theory and Applications" and Project  I1751-N26.}
\begin{document}

\begin{abstract}
We show for a class of sequences $\left(a_{n}\right)_{n \geq 1}$ of distinct positive integers, that for \textbf{no} $\alpha$ the sequence $\left(\left\{a_{n} \alpha\right\}\right)_{n \geq 1}$ does have Poissonian pair correlation. This class contains for example all strictly increasing integer sequences with positive upper density. This result motivates us to state a certain conjecture on Poissonian pair correlation which would be a significantly stronger version of a result of Jean Bourgain.
\end{abstract}

\date{}
\maketitle

\section{Motivation and statement of result} \label{sect_1}

\begin{definition}
Let $\left\|\cdot\right\|$ denote the distance to the nearest integer. A sequence $\left(x_{n}\right)_{n \geq 1}$ in $\left[\left.0,1\right.\right)$ is said to have (asymptotically) Poissonian pair correlations, if for each $s > 0$ the pair correlation function
$$
R_{2} \left(\left[-s,s\right], \left(x_{n}\right)_{n}, N\right):= \frac{1}{N} \# \left\{1 \leq i \neq j \leq N \left| \left\|x_{i} -x_{j} \right\|\leq \frac{s}{N} \right.\right\} 
$$
tends to $2s$ as $N \rightarrow \infty$.
\end{definition}
It is known that if a sequence $\left(x_{n}\right)_{n \geq 1}$ has Poissonian pair correlations, then it is uniformly distributed modulo 1, cf. \cite{ALT}, \cite{GL}, \cite{St}. The converse is not true in general.\\
The study of Poissonian pair correlations of sequences, especially of sequences of the form $\left(\left\{a_{n} \alpha\right\}\right)_{n \geq1}$, where $\alpha$ is an irrational, and $\left(a_{n}\right)_{n \geq 1}$ is a sequence of distinct positive integers, is primarily motivated by certain questions in quantum physics, especially in connection with the Berry-Tabor conjecture in quantum mechanics, cf. \cite{AAL}, \cite{Ma}. The investigation of Poissonian pair correlation was started by Rudnick, Sarnak and Zaharescu, cf. \cite{R1}, \cite{R2}, \cite{R3}, and was continued by many authors in the subsequent, cf. \cite{ALLB} and the references given there.\\

A quite general result which connects Poissonian pair correlation of sequences $\left(\left\{a_{n} \alpha\right\}\right)$ to concepts from additive combinatorics was given in \cite{ALLB}:\\
For a finite set $A$ of reals the {\em additive energy} $E(A)$ is defined as
$$
E\left(A\right) := \sum_{a+b=c+d} 1,
$$
where the sum is extended over all quadruples $\left(a,b,c,d\right) \in A^{4}$. Trivially one has the estimate $\left|A^{2}\right| \leq E\left(A\right)\leq \left|A\right|^{3}$, assuming that the elements of $A$ are distinct. The additive energy of sequences has been extensively studied in the additive combinatorics literature, cf. \cite{TV}. In \cite{ALLB} the following was shown:\\

\textbf{Theorem A in \cite{ALLB}.}
{\em Let $\left(a\left(n\right)\right)_{n \geq 1}$ be a sequence of distinct integers, and let $A_{N}$ denote the first $N$ elements of this sequence. If there exists a fixed $\varepsilon > 0$ such that
$$
E \left(A_{N}\right) = \mathcal{O} \left(N^{3-\varepsilon}\right),
$$
then for almost all $\alpha$ the sequence $\left(\left\{a_{n} \alpha \right\}\right)_{n \geq 1}$ has Poissonian pair correlation.}\\

On the other hand Bourgain in \cite{ALLB} showed the following negative result:\\

\textbf{Theorem B in \cite{ALLB}.}
{\em If $E\left(A_{N}\right) = \Omega \left(N^{3}\right)$ then there exists a subset of $\left[0,1\right]$ of positive measure such that for every $\alpha$ from this set the pair correlation of $\left(\left\{a_{n} \alpha\right\}\right)_{n \geq 1}$ is \textbf{not} Poissonian.}\\

In \cite{LT} the authors gave a sharper version of the result of Bourgain by showing that the set of exceptional values $\alpha$ from Theorem~2 has \textbf{full measure}.\\

In fact we conjecture that even more is true:

\begin{conjecture}
If $E \left(A_{N}\right) = \Omega \left(N^{3}\right)$ then for \textbf{every} $\alpha$ the pair correlation of $\left(\left\{a_{n} \alpha\right\}\right)_{n \geq 1}$ is \textbf{not} Poissonian.
\end{conjecture}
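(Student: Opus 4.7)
My plan is to split the conjecture into two regimes. For rational $\alpha = p/q$ the distinct integers $(a_n)$ map to at most $q$ points in $[0,1)$, so by pigeonhole some fibre of the map $n \mapsto \{a_n \alpha\}$ contains $\geq N/q$ terms, producing $\binom{N/q}{2}$ pairs with $\|x_i - x_j\| = 0 \leq s/N$. Hence $R_2([-s,s], (\{a_n\alpha\}), N) \to \infty$ for every $s > 0$, and PPC fails. This step does not use the energy hypothesis at all; consequently, only the irrational case is substantive.

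For irrational $\alpha$, I would exploit the additive structure forced by $E(A_N) = \Omega(N^3)$. The Balog--Szemer\'edi--Gowers theorem produces, for each such $N$, a subset $B_N \subset A_N$ with $|B_N| \gg N$ and small doubling $|B_N - B_N| \leq K|B_N|$. Freiman's theorem then embeds $B_N$ into a generalized arithmetic progression $P_N \subset \Z$ of bounded rank $d$ and cardinality $O(N)$. I would then analyse the image $\{p\alpha\}_{p \in P_N} \subset [0,1)$ via the following dichotomy: either some nonzero $d^\ast \in B_N - B_N$ has $\|d^\ast \alpha\| \leq s/N$ together with a popular representation count $r_{B_N}(d^\ast) \gg |B_N|$ (in which case this single difference contributes $\gg N$ close pairs to $R_2$, violating convergence to $2s$), or the orbit $\{p\alpha\}_{p \in P_N}$ is uniformly spread, in which case a multidimensional three-distance-type rigidity forces only $O_d(1)$ distinct local gap sizes along the GAP directions and hence prevents a Poissonian gap distribution. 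In either branch the contribution of $B_N \times B_N$ to $R_2$ disagrees with $2s$.

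The main obstacle, and the likely reason the conjecture has so far resisted proof, is the passage from the structured subset $B_N$ back to the full sequence $A_N$: pairs with at least one endpoint in $A_N \setminus B_N$ could in principle cancel the deviation produced by the $B_N \times B_N$ contribution. For $\alpha$ of bounded type (badly approximable numbers) one can presumably close this gap using quantitative Koksma--Weyl bounds that are uniform in the GAP directions. For Liouville-like $\alpha$, however, the projected orbit $\{p\alpha : p \in P_N\}$ can imitate Poissonian behaviour to arbitrarily good precision at the single scale $N$, so the argument must be iterated: one must extract a coherent nested family $B_{N_1} \subset B_{N_2} \subset \cdots$ with GAP hulls $P_{N_j}$ of controlled dimension, and use the simultaneous Diophantine constraints that PPC along the subsequence $(N_j)$ would place on $\alpha$ to derive a contradiction with the continued-fraction expansion of $\alpha$. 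Producing this multi-scale structural extraction, and proving its incompatibility with PPC for every single irrational, is where I expect the genuinely new technical input to be needed beyond Bourgain's variance method.
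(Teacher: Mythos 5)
The statement you were asked to prove is stated in the paper as a \emph{conjecture}: the paper does not prove it, and explicitly offers only partial support by proving it for quasi-arithmetic sequences of degree $d=1$ (Theorem~\ref{th_quasi}), which covers e.g.\ all strictly increasing sequences of positive upper density. Your overall strategy --- Balog--Szemer\'edi--Gowers plus Freiman to extract a dense subset $B_N\subset A_N$ lying in a generalized arithmetic progression $P_N$ of bounded rank, then analysing $\{p\alpha\}_{p\in P_N}$ --- is exactly the paper's strategy, and in the rank-one case the paper carries it out via the three-distance structure of $(\{j\alpha\})_{j\le M}$ and a four-case analysis on the continued fraction data of $\alpha$ at scale $N$. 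But your proposal is a plan, not a proof, and the step you wave at --- ``a multidimensional three-distance-type rigidity forces only $O_d(1)$ distinct local gap sizes along the GAP directions'' --- is precisely the part the paper identifies as the open obstruction for $d\ge 2$: no such clean rigidity is available (the paper points to the Haynes--Marklof survey and calls this ``not at all trivial''). So the substantive content of the conjecture is not established by your argument.

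Two more concrete issues with the sketch itself. First, the obstacle you name (pairs involving $A_N\setminus B_N$ ``cancelling'' the deviation) is largely illusory if the argument is set up correctly: $R_2$ is a count of pairs, so the contribution of $B_N\times B_N$ is a \emph{lower} bound for $R_2$ over $A_N$, and extra points can only add pairs. The correct design principle, which the paper follows in all four of its cases, is to always exhibit an \emph{excess} --- either $R_N(s)\ge 4s$ for a fixed $s$, or $R_N(s_2)-R_N(s_1)\ge 4(s_2-s_1)$ --- since an excess cannot be cancelled by additional pairs. Your ``uniformly spread'' branch, which argues via gap distribution rather than an excess of close pairs, does not have this monotonicity protection and would genuinely be vulnerable to compensation from the unstructured part. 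Second, because which case of the analysis applies varies with $N$ along the subsequence $(N_i)$, the scale $s$ at which the excess is detected must be drawn from a \emph{finite} set depending only on the universal constants ($c$, $K$, $d$), so that one fixed $s$ witnesses the failure of $\lim_N R_N(s)=2s$ infinitely often; the paper makes a point of constructing such finite sets $\mathcal{D}$ and $\mathcal{E}$, and your sketch does not address this at all --- as written, your chosen scale could drift with $N$ and yield no contradiction.
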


A very simple case of $\left(\left\{a_{n} \alpha\right\}\right)_{n \geq 1}$ with Poissonian pair correlation for \textbf{no} $\alpha$ is given by the pure Kronecker sequence $\left(\left\{n \alpha\right\}\right)_{n \geq 1}$.\\
It is the aim of this note to support this conjecture by proving it for a certain class of integer sequences $\left(a_{n}\right)_{n \geq 1}$, a class which for example also contains all strictly increasing sequences $\left(a_{n}\right)_{n \geq 1}$ with positive upper density. 

To be able to state our result we need an alternative classification of integer sequences $\left(a_{n}\right)_{n \geq 1}$ with $E \left(A_{N}\right) = \Omega \left(N^{3}\right)$:\\

For $v \in \mathbb{Z}$ let $A_{N} (v)$ denote the cardinality of the set
$$
\left\{\left(x,y\right) \in \left\{1, \ldots, N\right\}^{2}, x \neq y : a_{x} - a_{y} = v \right\}.
$$
Then
\begin{equation} \label{equ_aa}
E \left(A_{N}\right) = \Omega \left(N^{3}\right)
\end{equation}
is equivalent to
\begin{equation} \label{equ_bb}
\sum_{v \in \mathbb{Z}} A^{2}_{N} (v) = \Omega \left(N^{3}\right),
\end{equation}
which implies that there is a $\kappa > 0$ and positive integers $N_{1} < N_{2} < N_{3} < \ldots$ such that
\begin{equation} \label{equ_c22}
\sum_{v \in \mathbb{Z}} A^{2}_{N_{i}} (v) \geq \kappa N^{3}_{i}, \qquad i = 1,2,\dots.~\\
\end{equation}

It will turn out that sequences $\left(a_{n}\right)_{n \geq 1}$ satisfying \eqref{equ_aa} have a strong linear substructure. From \eqref{equ_c22} we can deduce by the Balog--Szemeredi--Gowers-Theorem (see \cite{Ba+Sz} and \cite{Gow1}) that there exist constants $c, C > 0$ depending only on $\kappa$ such that for all $i=1,2,3,\ldots$ there is a subset $A_{0}^{(i)} \subset \left(a_{n}\right)_{1 \leq n \leq N_{i}}$ such that 
$$
\left|A_{0}^{(i)}\right| \geq c N_{i} \qquad \text{and} \qquad \left|A_{0}^{(i)} + A_{0}^{(i)}\right| \leq C  \left|A_{0}^{(i)}\right| \leq C N_{i}.
$$
The converse is also true: If for all $i$ for a set $A_{0}^{(i)}$ with $A_{0}^{(i)} \subset \left(a_{n}\right)_{1 \leq n \leq N_{i}}$ with $\left| A_{0}^{(i)} \right| \geq c N_{i}$ we have $\left|A_{0}^{(i)} + A_{0}^{(i)}\right| \leq C \left|A_{0}^{(i)}\right|$, then 
$$
\sum_{v \in \mathbb{Z}} A^{2}_{N_{i}} (v) \geq \frac{1}{C} \left|A_{0}^{(i)}\right|^{3} \geq \frac{c^3}{C} N_{i}^{3}
$$
and consequently $\sum_{v \in \mathbb{Z}} A^{2}_{N} (v)= \Omega \left(N^{3}\right)$ (this an elementary fact, see for example Lemma~1~(iii) in \cite{vflev}.)\\

Consider now a subset $A_{0}^{(i)}$ of $\left(a_{n}\right)_{1 \leq n \leq N_{i}}$ with 
$$
\left|A_{0}^{(i)}\right| \geq c N_{i} \qquad \text{and} \qquad \left|A_{0}^{(i)} + A_{0}^{(i)} \right| \leq C \left|A_{0}^{(i)}\right|.
$$
By the theorem of Freiman (see \cite{Frei}) there exist constants $d$ and $K$ depending only on $c$ and $C$, i.e. depending only on $\kappa$ in our setting, such that there exists a \emph{$d$-dimensional arithmetic progression} $P_{i}$ of size at most $K N_{i}$ such that $A_{0}^{(i)} \subset P_{i}$. This means that $P_i$ is a set of the form 
\begin{equation}\label{equ_c}
P_{i} := \left\{ \left. h_{i} + \sum^{d}_{j=1} r_{j} k_{j}^{(i)} \right|  0 \leq r_{j} < s_{j}^{(i)} \right\},  
\end{equation}
with $b_{i}, k_{1}^{(i)}, \ldots, k^{(i)}_{d}, s_{1}^{(i)}, \ldots, s_{d}^{(i)} \in \mathbb{Z}$ and such that $s_{1}^{(i)} s_{2}^{(i)} \ldots s_{d}^{(i)} \leq K N_{i}$.\\

In the other direction again it is easy to see that for any set $A_{0}^{(i)}$ of the form \eqref{equ_c} we have
$$
\left|A_{0}^{(i)} + A_{0}^{(i)}\right| \leq 2^{d} K N_{i}.
$$

Based on these observations we make the following definition:

\begin{definition} \label{def_a}
Let $\left(a_{n}\right)_{n \geq 1}$ be a strictly increasing sequence of positive integers. We call this sequence {\em quasi-arithmetic of degree} $\mathbf{d}$, where $d$ is a positive integer, if there exist constants $C,K > 0$ and a strictly increasing sequence $\left(N_{i}\right)_{i \geq 1}$ of positive integers such that for all $i \geq 1$ there is a subset $A^{(i)} \subset \left(a_{n}\right)_{1 \leq n \leq N_{i}}$ with $\left|A^{(i)}\right| \geq C N_{i}$ such that $A^{(i)}$ is contained in a $d$-dimensional arithmetic progression $P^{(i)}$ of size at most $K N_{i}$.~\\
\end{definition}

The above considerations show:

\begin{proposition}
For a strictly increasing sequence $\left(a_{n}\right)_{n\geq 1}$ of positive integers we have $E\left(A_{N}\right)= \Omega \left(N^{3}\right)$ if and only if $\left(a_{n}\right)_{n \geq 1}$ is quasi-arithmetic of some degree d.
\end{proposition}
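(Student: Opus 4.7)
The plan is to reduce both implications to combinatorial statements about a single sequence of large subsets of $\{a_{1},\ldots,a_{N_{i}}\}$, via the equivalence of $E(A_{N}) = \Omega(N^{3})$ with $\sum_{v} A_{N}(v)^{2} = \Omega(N^{3})$. This equivalence follows from the identity
\[
E(A_{N}) \;=\; \sum_{v \in \mathbb{Z}} A_{N}(v)^{2} + N^{2},
\]
which one obtains by grouping the quadruples $(i,j,k,l)$ with $a_{i}+a_{l}=a_{k}+a_{j}$ according to the common value $v = a_{i}-a_{k}$, separating off the $N^{2}$ trivial quadruples on the diagonal $\{i=k,\, j=l\}$, and using that the $a_{n}$ are distinct so that $A_{N}(0) = 0$. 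In particular, under the hypothesis $E(A_{N}) = \Omega(N^{3})$ one finds $\kappa > 0$ and $N_{1} < N_{2} < \cdots$ for which \eqref{equ_c22} holds.

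For the forward direction I would apply the Balog--Szemer\'edi--Gowers theorem to the set $\{a_{1},\ldots,a_{N_{i}}\}$, using the energy lower bound from \eqref{equ_c22}: with constants $c, C$ depending only on $\kappa$, one extracts a subset $A_{0}^{(i)}$ of cardinality at least $c N_{i}$ with $|A_{0}^{(i)}+A_{0}^{(i)}| \leq C |A_{0}^{(i)}|$. Freiman's theorem, applied to each $A_{0}^{(i)}$ with input parameters depending only on $c$ and $C$, then embeds $A_{0}^{(i)}$ in a $d$-dimensional arithmetic progression $P^{(i)}$ of size at most $K N_{i}$, where $d$ and $K$ are independent of $i$. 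Setting $A^{(i)} := A_{0}^{(i)}$ gives exactly the data required by Definition~\ref{def_a}.

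For the converse, suppose $(a_{n})$ is quasi-arithmetic of degree $d$ with constants $C, K$ and witness sequence $(N_{i})$. The explicit parametrisation \eqref{equ_c} immediately gives $|P^{(i)}+P^{(i)}| \leq 2^{d}|P^{(i)}| \leq 2^{d} K N_{i}$, hence $|A^{(i)}+A^{(i)}| \leq 2^{d} K N_{i}$. Combining this with the Cauchy--Schwarz bound $E(B) \geq |B|^{4}/|B+B|$ for $B = A^{(i)}$ and with $|A^{(i)}| \geq C N_{i}$, one obtains $E(A^{(i)}) \geq \frac{C^{4}}{2^{d}K}\, N_{i}^{3}$; since $E$ is monotone under inclusion and $A^{(i)} \subseteq \{a_{1},\ldots,a_{N_{i}}\}$, this yields $E(A_{N_{i}}) = \Omega(N_{i}^{3})$, i.e., \eqref{equ_aa}. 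The only non-routine point in the whole argument is verifying that the constants $d, K$ in the forward direction can be chosen uniformly in $i$; this is automatic from the standard quantitative forms of Balog--Szemer\'edi--Gowers and Freiman, since the input parameter $\kappa$ is itself independent of $i$.
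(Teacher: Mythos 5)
Your proposal is correct and follows essentially the same route as the paper: the reformulation of $E(A_{N})=\Omega(N^{3})$ via $\sum_{v}A_{N}(v)^{2}$, Balog--Szemer\'edi--Gowers plus Freiman (with constants depending only on $\kappa$, hence uniform in $i$) for the forward direction, and the doubling bound from the progression combined with the Cauchy--Schwarz inequality $E(B)\geq |B|^{4}/|B+B|$ for the converse, which is exactly the content of the Lev lemma the paper cites. The only difference is presentational: you make the energy identity and the Cauchy--Schwarz step explicit where the paper refers to them as elementary facts.
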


Hence our conjecture stated above is equivalent to.

\begin{conjecture}
If $\left(a_{n}\right)_{n \geq 1}$ is quasi-arithmetic of some degree d, then there is \textbf{no} $\alpha$ such that the pair correlation of $\left(\left\{a_{n} \alpha\right\}\right)_{n\geq 1}$ is Poissonian.
\end{conjecture}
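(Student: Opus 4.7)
The plan is to split the analysis into the cases $\alpha \in \Q$ and $\alpha \notin \Q$; the rational case is elementary, while the irrational case exploits the $d$-dimensional arithmetic-progression structure provided by quasi-arithmeticity.

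If $\alpha = p/q$ is rational in lowest terms, then $\{a_n\alpha\}$ takes at most $q$ distinct values in $\{0, 1/q, \ldots, (q-1)/q\}$. By pigeonhole, some residue class modulo $q$ contains at least $N/q$ of the integers $a_1, \ldots, a_N$, producing at least $(N/q)(N/q - 1)$ ordered pairs $(x,y)$ with $\|a_x\alpha - a_y\alpha\| = 0 \leq s/N$. Dividing by $N$ yields $R_2 \geq N/q^2 - 1/q \to \infty$, which cannot equal $2s$ for any fixed $s > 0$.

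For $\alpha \in \R \setminus \Q$, set $\beta_j^{(i)} := \{k_j^{(i)}\alpha\}$ for $j = 1, \ldots, d$, and let $B^{(i)} \subset T^{(i)} := \prod_{j=1}^d \{0, 1, \ldots, s_j^{(i)}-1\}$ be the index set corresponding to $A^{(i)}$ inside the ambient AP $P^{(i)}$, so that $|B^{(i)}| \geq CN_i$ and $|B^{(i)}|/|T^{(i)}| \geq C/K$. For $a,b \in A^{(i)}$ with associated indices $r,r' \in B^{(i)}$ one has $\|(a-b)\alpha\| = \|\sum_{j=1}^{d} (r_j - r_j') \beta_j^{(i)}\|$, so the restricted pair count factors through the projection $\pi : r \mapsto \{\sum_j r_j \beta_j^{(i)}\} \in [0, 1)$. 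The target is to show that along infinitely many $i$ and for some fixed $s > 0$ the number of pairs in $A^{(i)} \times A^{(i)}$ with $\|\cdot\| \leq s/N_i$ strictly exceeds $2sN_i$, contradicting Poissonianness. The natural first attempt is a multidimensional Dirichlet pigeonhole: partitioning $[0, 1)$ into $\lfloor N_i/(2s) \rfloor$ equal intervals and applying Cauchy--Schwarz to the $|B^{(i)}|$ values $\pi(B^{(i)})$ yields at least $(2C^2 s - C)N_i$ pairs with $\|\cdot\| \leq 2s/N_i$, a bound of the correct order of magnitude in $N_i$ but with implicit constant $\leq 1$ that fails to beat the Poissonian target.

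\textbf{The main obstacle} is therefore to exhibit additional clustering coming from the quasi-arithmetic structure, beyond what an unstructured set of $|B^{(i)}|$ points would give. I would attempt this by iterating Dirichlet's theorem to construct a long list of nonzero integer tuples $t^{(1)}, \ldots, t^{(L)}$ with $|t^{(\ell)}_j| < s_j^{(i)}$ and $\|\sum_j t^{(\ell)}_j \beta_j^{(i)}\| \leq s/N_i$, and simultaneously obtaining a uniform lower bound on the multiplicities $|B^{(i)} \cap (B^{(i)} + t^{(\ell)})|$ via a Pl\"unnecke--Ruzsa argument based on the doubling estimate $|B^{(i)} + B^{(i)}| \leq 2^d K N_i$. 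A case split on the Diophantine type of the $\beta_j^{(i)}$ may be needed, separating the regime where some $\beta_j^{(i)}$ is well approximated by a rational of small denominator (which is essentially reducible to the rational-$\alpha$ argument above) from the truly generic regime. In my view the delicate step is obtaining a \emph{uniform} lower bound on the pair multiplicities as $\ell$ varies; this is where any prospective proof is most likely to break, and resolving it appears to require additive-combinatorial input beyond the Balog--Szemer\'edi--Gowers, Freiman, and Pl\"unnecke--Ruzsa machinery presented in the excerpt.
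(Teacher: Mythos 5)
This statement is a \emph{conjecture} in the paper, not a theorem: the author proves only the case $d=1$ (Theorem~\ref{th_quasi}) and explicitly remarks that $d\geq 2$ would require a careful analysis of sets of the form $\{K_1\alpha_1+\dots+K_d\alpha_d\}$, which is described as a nontrivial open task. Your proposal does not close this gap either, and you say so yourself: the entire argument rests on the paragraph beginning ``The main obstacle,'' where the needed extra clustering beyond the Cauchy--Schwarz baseline is only a plan (iterated Dirichlet approximations plus a Pl\"unnecke--Ruzsa multiplicity bound), and you concede that the uniform lower bound on the multiplicities $|B^{(i)}\cap(B^{(i)}+t^{(\ell)})|$ is exactly where the argument is likely to break. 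A sound-but-insufficient pigeonhole bound followed by an unexecuted strategy is not a proof; as it stands the irrational case is open, which matches the status of the statement in the paper. (Your rational case is fine, though it is also immediate from the fact that Poissonian pair correlation implies uniform distribution mod one.)

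Two further points of comparison with what the paper actually does for $d=1$, since they bear on whether your plan could be completed. First, the paper's method is not additive-combinatorial at all once the reduction to a one-dimensional progression is made: it uses the continued fraction expansion of $\alpha$ and the three-distance structure of $\{j\alpha\}_{j\leq M}$, with a four-way case split on the relative sizes of $b$, $a=\alpha_{l+1}$, $q$ and $N$. Second, and more importantly, in two of those four cases one \emph{cannot} win by showing that the count of pairs at distance $\leq s/N$ exceeds $2s$; instead the paper detects a spike --- many pairs whose difference equals one fixed value $\beta\delta$ or $\kappa$ --- and derives a contradiction from $R_N(s_2)-R_N(s_1)\geq 4(s_2-s_1)$ with $s_1,s_2$ drawn from a finite set depending only on the structural constants. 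Your sketch frames the goal purely as beating $2sN_i$ in the cumulative count, so even if your multiplicity bound were established it is not clear it would handle the regimes where the differences concentrate on a single value of order $1/N$ and the cumulative count alone is not demonstrably super-Poissonian. Any serious attack on the $d\geq 2$ case would need an analogue of both mechanisms.
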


Now we can state our result:

\begin{theorem} \label{th_quasi}
If $\left(a_{n}\right)_{n \geq 1}$ is quasi-arithmetic of degree $d=1$, then there is \textbf{no} $\alpha$ such that the pair correlation of $\left(\left\{a_{n} \alpha\right\}\right)_{n \geq 1}$ is Poissonian.
\end{theorem}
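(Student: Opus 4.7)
I would split the analysis according to whether $\alpha$ is rational or irrational.

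\textbf{Rational case.} If $\alpha = p/q \in \Q$, the sequence $(\{a_{n}\alpha\})_{n\geq 1}$ is contained in the $q$-point set $\{0, 1/q, \ldots, (q-1)/q\}$. Pigeonhole forces some value to be attained by at least $\lceil N/q \rceil$ of the first $N$ terms, producing of order $N^{2}/q^{2}$ ordered pairs at distance $0$. Hence $R_{2}(s, N) \geq (N - q)/q^{2} \to \infty$ for every $s > 0$, ruling out any Poissonian limit.

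\textbf{Irrational case, set-up.} Now fix $\alpha$ irrational. Definition~\ref{def_a} with $d=1$ gives, for each $i$, an arithmetic progression $P^{(i)} = \{h_{i} + r k_{i} : 0 \leq r < s_{i}\}$ with $s_{i} \leq K N_{i}$ and a subset $A^{(i)} \subset \{a_{1}, \ldots, a_{N_{i}}\} \cap P^{(i)}$ with $|A^{(i)}| \geq C N_{i}$. Writing $A^{(i)} = \{h_{i} + r k_{i} : r \in I^{(i)}\}$ with $I^{(i)} \subset \{0, \ldots, s_{i}-1\}$, and setting $\beta_{i} := \{k_{i}\alpha\} \in (0,1)$ (irrational because $\alpha$ is and $k_{i} \neq 0$), one has $\|a_{n}\alpha - a_{m}\alpha\| = \|(r_{n} - r_{m})\beta_{i}\|$ for $a_{n}, a_{m} \in A^{(i)}$. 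Restricting the pair correlation counter to pairs inside $A^{(i)}$ yields the key lower bound
$$R_{2}(s, N_{i}) \;\geq\; \frac{2}{N_{i}}\sum_{r=1}^{s_{i}-1} T_{i}(r)\,\mathbf{1}\bigl[\|r\beta_{i}\| \leq s/N_{i}\bigr], \qquad T_{i}(r) := |I^{(i)} \cap (I^{(i)} + r)|.$$

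\textbf{Strategy for forcing a non-Poissonian limit.} The goal will be to exhibit, for a suitable fixed $s > 0$, a lower bound on this quantity that is incompatible with the value $2s$. By Dirichlet's approximation theorem applied to $\beta_{i}$ at modulus $s_{i}$, I would pick $q_{i} \in [1, s_{i}]$ with $\|q_{i}\beta_{i}\| \leq 1/s_{i} \leq 1/(C N_{i})$. The multiples $q_{i}, 2q_{i}, \ldots, L q_{i}$, with $L := \min\!\bigl(\lfloor s_{i}/q_{i}\rfloor,\,\lfloor s \cdot s_{i}/N_{i}\rfloor\bigr)$, all satisfy $\|\ell q_{i}\beta_{i}\| \leq s/N_{i}$ and so contribute $T_{i}(\ell q_{i})$ to the sum. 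Combined with the identity $\sum_{r=1}^{s_{i}-1} T_{i}(r) = |I^{(i)}|(|I^{(i)}|-1) \geq C^{2}N_{i}^{2} - C N_{i}$, a collective estimate of the form $\sum_{\ell=1}^{L} T_{i}(\ell q_{i}) \gtrsim L N_{i}$ would give $R_{2}(s, N_{i}) \gtrsim L$, which for $s$ chosen large enough (in terms of the Freiman constants $C, K$) contradicts $R_{2}(s, N_{i}) \to 2s$.

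\textbf{Main obstacle.} The delicate step is justifying the collective lower bound $\sum_{\ell}T_{i}(\ell q_{i}) \gtrsim L N_{i}$: a priori the arithmetic progression of "good differences" $\{\ell q_{i}\}$ need not be a typical sample for $T_{i}$, since the additive structure of $I^{(i)}$ could conspire against $q_{i}$ and make every individual $T_{i}(\ell q_{i})$ small. I expect this to be handled either by (i) choosing $q_{i}$ as a best continued-fraction approximation of $\beta_{i}$ at scale $s_{i}$ so that $q_{i}$ itself reflects the approximately-arithmetic structure of $I^{(i)}$ supplied by Freiman's theorem (the same theorem that gave the containment $A^{(i)} \subset P^{(i)}$), or by (ii) enlarging the sum to all of $J_{i} := \{r \in [1, s_{i}) : \|r\beta_{i}\| \leq s/N_{i}\}$, whose cardinality is bounded below by $\asymp s \cdot s_{i}/N_{i}$ via equidistribution of the rotation by $\beta_{i}$, and invoking a Pl\"unnecke--Ruzsa/Freiman-type estimate on $T_{i}$ that uses the doubling bound $|I^{(i)} + I^{(i)}| \lesssim |I^{(i)}|$. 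Either approach ultimately reduces the problem to the known failure of Poissonian pair correlation for the pure Kronecker sequence $(\{n\beta_{i}\})_{n \geq 1}$ already highlighted in the introduction.
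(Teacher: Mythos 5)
Your set-up (reduction to $\|(r_n-r_m)\beta_i\|$ with $\beta_i=\{k_i\alpha\}$, and the lower bound for $R_2$ in terms of $T_i(r)=|I^{(i)}\cap(I^{(i)}+r)|$) matches the paper's reduction to studying $\{r\alpha'\}$ with $\alpha'=k\alpha$, and your rational case is fine (indeed redundant, since Poissonian pair correlation already forces equidistribution). But the heart of the argument --- what you label the ``main obstacle'' --- is not merely left unproved; the specific route you propose fails. The collective bound $\sum_{\ell=1}^{L}T_i(\ell q_i)\gtrsim L N_i$ is false in a completely natural scenario: take $I^{(i)}$ to be an interval of length $CN_i$ inside $\{0,\dots,s_i-1\}$ with $s_i\approx KN_i$, and suppose the Dirichlet denominator $q_i$ produced at modulus $s_i$ exceeds $CN_i$ (nothing in Dirichlet's theorem prevents this). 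Then $I^{(i)}\cap(I^{(i)}+\ell q_i)=\emptyset$ for every $\ell\geq 1$, so every term of your sum vanishes. The fix cannot be a generic Pl\"unnecke--Ruzsa or equidistribution argument either, because the set $J_i$ of good shifts $r$ is determined by the Diophantine behaviour of $\beta_i$ at scale $N_i$, which is not controlled, and in the regime where $\|r\beta_i\|\leq s/N_i$ is only achieved by $r$ larger than $|I^{(i)}|$ no fixed $s$ can yield $R_2(s,N_i)\geq 4s$ at all.

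This is exactly why the paper's proof does not follow your strategy. It analyses the three-distance (bundle) structure of $(\{j\alpha'\})_{j\leq M}$ via the continued fraction data $q=q_l$, $a=\alpha_{l+1}$, $b$ with $bq\leq M<(b+1)q$, and splits into four regimes according to the size of $b\delta$ relative to $1/N$ and of $a/b$ and $q$ relative to $N$. Crucially, in two of those regimes one cannot produce an excess of pairs below a single threshold $s/N$; instead one shows that a positive proportion of pairs have \emph{identical} separation $\beta\delta$ (resp.\ $\kappa$), and detects the failure of the Poissonian property through $R_N(s_2)-R_N(s_1)\geq 4(s_2-s_1)$ with $s_1,s_2$ drawn from a finite grid depending only on $c,K$ (finiteness being what lets one fixed pair recur along infinitely many $N_i$). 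This second mechanism, and the case analysis that makes it necessary, are entirely absent from your proposal, so the argument as sketched does not close.
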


A simple example of quasi-arithmetic $\left(a_{n}\right)_{n \geq 1}$ of degree 1 are strictly increasing sequences of integers with positive upper density. Hence we have

\begin{corollary}
If $\left(a_{n}\right)_{n \geq 1}$ is a strictly increasing sequence of positive integers with positive upper density, i.e., 
$$
\underset{n \rightarrow \infty}{\limsup}~\frac{n}{a_{n}} > 0,
$$
then for no $\alpha$ the pair correlation of $\left(\left\{a_{n} \alpha\right\}\right)_{n \geq 1}$ is Poissonian.
\end{corollary}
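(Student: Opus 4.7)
I argue by contradiction: fix $\alpha\in\mathbb{R}$ and assume $(\{a_n\alpha\})_{n\ge 1}$ has Poissonian pair correlations. If $\alpha=p/q$ is rational, the sequence takes only $q$ distinct values and pigeonhole produces $\Omega(N^2/q)$ colliding pairs among the first $N$ terms, so $R_2([-s,s],\cdot,N)\to\infty$ for every $s>0$, a contradiction. Henceforth assume $\alpha$ irrational. Invoking Definition~\ref{def_a}, take the subsequence $(N_i)$, the ambient arithmetic progressions $P^{(i)}=h_i+k_i\{0,\ldots,s_i-1\}$, and the subsets $A^{(i)}=h_i+k_iB^{(i)}$ with $B^{(i)}\subset\{0,\ldots,s_i-1\}$, $|B^{(i)}|\ge CN_i$, and $CN_i\le s_i\le KN_i$. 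Set $\beta_i:=k_i\alpha$, which is irrational because $\alpha$ is irrational and $k_i\ne 0$.

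\textbf{Reduction to a counting lower bound.} Since $\{a\alpha\}=\{h_i\alpha+r\beta_i\}$ for $a=h_i+rk_i\in A^{(i)}$, the set $\{\{a\alpha\}:a\in A^{(i)}\}$ is a shifted copy of the partial Kronecker orbit $\{\{r\beta_i\}:r\in B^{(i)}\}$. Counting only close pairs drawn from $A^{(i)}$ gives
\[
N_i\cdot R_2([-s,s],(\{a_n\alpha\}),N_i)\;\ge\;\sum_{\substack{m\ne 0\\|m|<s_i}} B^{(i)}(m)\,\mathbf{1}\{\|m\beta_i\|\le s/N_i\},
\]
where $B^{(i)}(m):=\#\{(r,r')\in(B^{(i)})^2:\,r-r'=m\}$. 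To contradict PPC it suffices to exhibit one $s>0$ for which the right-hand side exceeds $2sN_i$ along infinitely many $i$. By Dirichlet's theorem applied to $\beta_i$ there exists $q_i\in\{1,\ldots,s_i-1\}$ with $\|q_i\beta_i\|\le 1/s_i\le 1/(CN_i)$, and the multiples $jq_i$ for $1\le j\le\min(\lfloor s_i/q_i\rfloor,sK)$ all satisfy $\|jq_i\beta_i\|\le s/N_i$, hence lie in the summation window. In the easy regime where the density $|B^{(i)}|/s_i$ exceeds $1/2$, the elementary overlap bound $B^{(i)}(m)\ge 2|B^{(i)}|-s_i-|m|$ is positive and of order $N_i$ for these multiples, and summing over $j$ yields a lower bound of order $sN_i$ with an implicit constant exceeding $2$ for $s$ in an appropriate range, already contradicting PPC.

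\textbf{Main obstacle.} The real work is the low-density regime, in which the naive overlap bound collapses and $B^{(i)}(jq_i)$ can be arbitrarily small for the Dirichlet multiples. To handle this I would pass to the set of \emph{popular differences} $T_i:=\{m:B^{(i)}(m)\ge\eta N_i\}$ with a constant $\eta=\eta(C,K)>0$: the identity $\sum_m B^{(i)}(m)=|B^{(i)}|^2\ge C^2N_i^2$ combined with the fact that $B^{(i)}(\cdot)$ is supported in $(-s_i,s_i)\subset(-KN_i,KN_i)$ yields $|T_i|=\Omega(N_i)$ by a Markov-type averaging. The remaining and delicate step is to intersect $T_i$ with the small window $\{m:\|m\beta_i\|\le s/N_i\}$ on the circle, and this is where I expect the heart of the argument to lie. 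A natural approach is to combine the three-distance theorem applied to the length-$s_i$ Kronecker orbit $\{\{r\beta_i\}\}_{0\le r<s_i}$ (which describes how often and for which $m$ the values $\|m\beta_i\|$ are small in terms of the continued-fraction convergents of $\beta_i$) with the additive structure $T_i\subset B^{(i)}-B^{(i)}$ inherited from the one-dimensional AP containing $A^{(i)}$, so as to force $T_i$ to spread out near these convergents rather than avoiding them, yielding at least one $m\in T_i$ with $\|m\beta_i\|$ of order $1/N_i$ and thus the desired violation of PPC.
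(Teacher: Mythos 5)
There is a genuine gap, and you have in fact flagged it yourself: the entire difficulty of the statement is concentrated in the step you describe as ``where I expect the heart of the argument to lie.'' The paper proves the Corollary by observing that positive upper density makes $(a_n)$ quasi-arithmetic of degree $1$ (along a subsequence $N_i$ with $N_i/a_{N_i}\ge c$, the first $N_i$ terms lie in the interval $\{1,\dots,a_{N_i}\}$, a one-dimensional progression of size at most $N_i/c$, so in your notation $k_i=1$, $\beta_i=\alpha$), and then invokes Theorem~\ref{th_quasi}; all of the substance is the proof of that theorem. That proof is exactly the missing intersection step: a four-case analysis on the continued-fraction data of $\alpha$ (the denominator $q=q_l$ with $q_l\le M<q_{l+1}$, the partial quotient $a=\alpha_{l+1}$, the multiplicity $b$, and the gap $\delta$), using the three-distance/bundle structure of $(\{j\alpha\})_{j\le M}$, the counting Lemmata~\ref{lem_uno}--\ref{lem_tres}, and --- crucially --- two values $s_1<s_2$ drawn from finite sets $\mathcal{D}$, $\mathcal{E}$ depending only on $c,K$, so that one fixed pair works for infinitely many $N_i$ and forces $R_N(s_2)-R_N(s_1)\ge 4(s_2-s_1)$. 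Your sketch of ``forcing $T_i$ to spread out near the convergents'' is a plausible starting point but is not an argument, and note that even its conclusion would not suffice as stated: producing one popular difference $m$ with $\|m\alpha\|\asymp 1/N_i$ yields $\Omega(N_i)$ pairs at a single distance, which contributes only $\Omega(1)$ to $R_N(s)$ and is compatible with $R_N(s)\to 2s$; one must additionally trap that distance between $s_1/N$ and $s_2/N$ with $s_2-s_1$ small and fixed, which is the role of the grids $\mathcal{D}$ and $\mathcal{E}$ in Cases 2 and 4 of the paper.

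A secondary but real problem is your ``easy regime.'' The overlap bound $B^{(i)}(m)\ge 2|B^{(i)}|-s_i-|m|$ is useless for the Dirichlet multiples $m=jq_i$ when $q_i$ is comparable to $s_i$ (e.g.\ for $B^{(i)}=\{0,\dots,s_i-1\}$ and $q_i=s_i-1$ one has $B^{(i)}(q_i)=1$, not $\Omega(N_i)$), so even at density above $1/2$ the claimed lower bound of order $sN_i$ does not follow; indeed the pure Kronecker case is handled in the paper not by Dirichlet denominators but by the two-valued gap structure \eqref{equ_cinco} in Case 4. The rational-$\alpha$ observation and the reduction to counting pairs inside $A^{(i)}$ are fine, and your identification of the popular-difference set $T_i$ with $|T_i|=\Omega(N_i)$ is correct, but as written the proof is incomplete precisely at the point where the theorem becomes nontrivial.
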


The proof of Theorem~\ref{th_quasi} will be provided in the next two sections. \\
For $d \geq 2$ it seems to be necessary to study the structure of sets of the form
$$
\left\{K_{1} \alpha_{1} +K_{2} \alpha_{2} + \ldots + K_{d} \alpha_{d} \left| 1 \leq K_{i} \leq n_{i}, i=1, \ldots, d \right.\right\}
$$
very carefully. This indeed is a not at all trivial task. See for example \cite{HM} for an excellent survey on this topic and the references given there.

\section{Auxiliary Results}
\begin{proof}
Let $\left(a_{n}\right)_{n \geq 1}$ be quasi-arithmetic of degree 1. Let $c \leq 1, K \geq 1$, the strictly-increasing sequence $\left(N_{i}\right)_{i \geq 1}$ of positive integers, and $A^{(i)} \subseteq \left(a_{n}\right)_{1 \leq n \leq N_{i}}$ with $\left|A^{(i)}\right|\geq c \cdot N_{i}$ be such that $A^{(i)}$ is contained in a one-dimensional arithmetic progression $P^{(i)}$ of size at most $KN$:\\

Let us fix some $i$, and set for simplicity\\
$N := N_{i}$\\
$A:= A^{(i)} := \left(a_{n_{j}}\right); j = 1, \ldots, W$ with $1 \leq n_{1} < n_{2} < \ldots < n_{W} \leq N$, and $W= \gamma^{(i)} \cdot N$ with $\gamma := \gamma^{(i)} \geq c$.\\
$P := P^{(i)} := \left\{h + rk \left| \right. 0 \leq r < \Gamma^{(i)} \cdot N \right\}$ with $\Gamma := \Gamma^{(i)} \leq K.$\\

In fact in the following we will consider a certain subsequence of the $N_{i}$, namely:\\
let $\tilde{\gamma} := \liminf \gamma^{(i)} \geq c$.\\
We consider only these $N_{i_{l}}$ with indices $i_{l}$ such that $\frac{\tilde{\gamma}}{2} \leq \gamma^{\left(i_{l}\right)} \leq 2 \tilde{\gamma}$. For these $i_{l}$ let $\tilde{\Gamma} := \limsup \Gamma^{\left(i_{l}\right)}$.\\

We have $\tilde{\Gamma} \leq K$. We consider only these indices with $2 \tilde{\Gamma} \geq \Gamma^{\left(i_{l}\right)} \geq\frac{\tilde{\Gamma}}{2}$. For simplicity we assume that $N_{i}$ already satisfies these conditions, consequently we can choose $\frac{\tilde{\gamma}}{2}= c$ and $2 \tilde{\Gamma} = K$ and hence $c \leq \gamma^{(i)} \leq 4 c$ and $\frac{K}{4} \leq \Gamma^{(i)} \leq K$ always.\\

Further we may assume $h=0$. For general $h$ the proof runs quite analogously. And we may assume $k=1$, since studying $\left\{r k \alpha\right\}$ is nothing else than studying $\left\{r \alpha'\right\}$ with $\alpha' = k \alpha$. So $P = \left\{r \left| \right. 0 \leq r < \Gamma \cdot N \right\}$.\\

Let $\alpha$ have continued fraction expansion $\alpha = \left[0; \alpha_{1}, \alpha_{2}, \ldots \right]$ and best approximation denominators $\left(q_{i}\right)_{i \geq 0}$ with $q_{i+1} = \alpha_{i+1} q_{i} + q_{i-1}$. Set $M:= \Gamma \cdot N$, and let $l$ be such that $q_{l} \leq M < q_{l+1}$ and $b$ with $1 \leq b < \alpha_{l+1}$ be such that $b q_{l} \leq M < \left(b+1\right) q_{l}$. For simplicity in the following we set $q:= q_{l}, a = \alpha_{l+1}$. We frequently will use 
\begin{equation} \label{equ_uno}
\frac{bq}{K} \leq \frac{bq}{\Gamma} \leq N = \frac{M}{\Gamma} \leq \frac{\left(b+1\right)q}{\Gamma} \leq \frac{2 bq}{\Gamma} \leq \frac{8 bq}{K}.
\end{equation}
Let $S_{\alpha} := \left(\left\{a_{n_{j}} \alpha \right\}\right)_{j=1, \ldots, W}$ and $\bar{S}_{\alpha} := \left(\left\{j \alpha \right\}\right)_{j=1, \ldots, M}$. Then $S_{\alpha}$ is a subset of $\bar{S}_{\alpha}$ with $W = \gamma \cdot N = \frac{\gamma}{\Gamma} \cdot M$ elements. We order the elements of $\bar{S}_{\alpha}$ in $\left[\left.0,1\right.\right)$ in ascending order, i.e., $\bar{S}_{\alpha} = \left\{\beta_{1}, \ldots, \beta_{M}\right\}$ with $0 < \beta_{1} < \beta_{2} < \ldots < \beta_{M} < 1$.\\

Let us further assume that the index $l$ which is defined by $q=q_{l} \leq M < q_{l+1}$ is even (for $l$ odd we argue quite similar). Then it is well known that $\bar{S}_{\alpha}$ consists of $q$ bundles, each bundle consisting of $b$ or $b+1$ elements, and each bundle contained in exactly one of the intervals $\left[\left.\frac{i}{q}, \frac{i+1}{q}\right.\right), i=0, \ldots, q-1$, as is sketched in Figure 1.
\begin{center}
\includegraphics[angle=0,width=150mm]{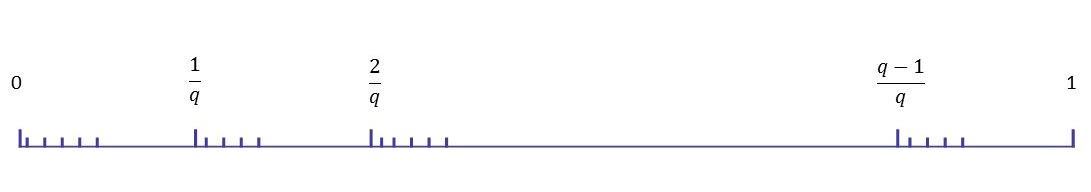}\\
Figure 1
\label{fig_1}
\end{center}

Let $\beta_{u} < \beta_{u+1} < \ldots < \beta_{u+w}$ with $w = b-1$ or $w=b$ be the elements of $\bar{S}_{\alpha}$ contained in $\left[\left.\frac{i}{q}, \frac{i+1}{q}\right.\right)$. See Figure 2.\\

\begin{center}
\includegraphics[angle=0,width=150mm]{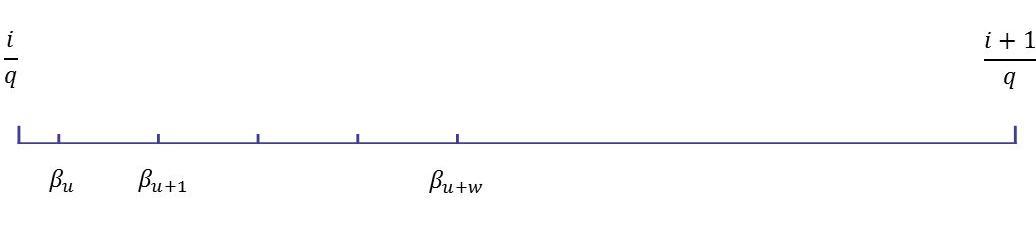}\\
Figure 2
\label{fig_2}
\end{center}

For simplicity in all the following we assume $w = b$, the case $w=b-1$ is treated in the same way. Then by basic properties of continued fractions we know that
$$
\delta := \beta_{u+1} - \beta_{u} = \beta_{u+2} - \beta_{u+1} = \ldots = \beta_{u+w} - \beta_{u+w-1},
$$
with
\begin{equation} \label{equ_q}
\frac{1}{3 q a} \leq \frac{1}{q \left(a+2\right)} < \delta < \frac{1}{qa}
\end{equation}
and hence
\begin{equation} \label{equ_b}
\frac{b}{3 q a} < \beta_{u+w}-\beta_{u} < \frac{b}{q a}.
\end{equation}
Further
\begin{equation} \label{equ_a}
\beta_{u} - \frac{i}{q} < \delta < \frac{1}{q a}.
\end{equation}
Let $P_{i}$ for $i= 1, \ldots, b$ be the subset of $\bar{S}_{\alpha}$ consisting of the $i$-th elements $\left(\beta_{u+i-1}\right)$ from each bundle. For a fixed $i$ we denote the elements of $P_{i}$ in ascending order by $v_{0}~<~v_{1}~<~\ldots~<~v_{q-1}$. Note, $v_{p}$ is contained in $\left[ \left. \frac{p}{q}, \frac{p+1}{q} \right.\right)$. Again by basic properties of continued fractions we always have
\begin{equation} \label{equ_dos}
v_{j+1} - v_{j} \geq \frac{1}{2 q}
\end{equation}
for all $j$.\\

For a fixed positive integer $m < q$ consider now the set of distances $v_{m}- v_{0}, v_{m+1} - v_{1}, \ldots, v_{q-1}-v_{q-m-1}$. We fix a $j$ and consider $v_{m+j}-v_{j}$. $v_{j}$ is given by $v_{j} = \left\{\left(\left(i-1\right)\cdot q +y\right)\alpha\right\}$ for some $y \in \left\{0,1,\ldots, q-1\right\}$ and (with the same $y$) $v_{m+j}$ is given by \\
$v_{m+j} = \left\{\left(\left(i-1\right)q + \left(y+mq'\right)~\text{mod}~q\right)\alpha\right\}$ where $q' := q_{l-1}$. \\
Hence $v_{m+j}-v_{j} = \left\{\left(y-\left(y+m q'\right)~\text{mod}~ q\right) \alpha \right\}$, and this is either $\left\{\left(- \left(m q'\right) \text{mod}~ q\right) \alpha \right\}$ or $\left\{\left(q- \left(m q'\right) \text{mod}~ q\right)\alpha\right\}$. Hence, the set of distances 
\begin{equation} \label{equ_cinco}
v_{m} - v_{0}, v_{m+1} - v_{1}, \ldots, v_{q-1} - v_{q-m-1}
\end{equation}
can attain a most two different values.\\

For the proof we will need the following three simple Lemmata:
\begin{lemma} \label{lem_uno}
Let $\mathcal{B} = \left[\left.0,B\right.\right)$ be an interval of length $B \leq 1$. Let $0 < \tau \leq \frac{B}{2}$. For an integer $\sigma \geq 2$ let $x_{1}, x_{2}, \ldots, x_{L}$ with $L = \left(\frac{B}{\tau}+1\right) \sigma$ be $L$ points in $\mathcal{B}$. (We assume for simplicity $\frac{B}{\tau} \in \mathbb{N}$.) Then
$$
\wedge := \# \left\{1 \leq i \neq j \leq L \left| \right. \left|x_{i}-x_{j}\right|<\tau\right\} \geq \frac{\sigma^{2}B}{2 \tau}.
$$
\end{lemma}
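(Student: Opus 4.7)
The plan is a standard pigeonhole/Cauchy–Schwarz argument on a fixed partition of $\mathcal{B}$.

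First I would partition $\mathcal{B} = [0,B)$ into the $B/\tau$ disjoint half-open intervals $I_k := [k\tau,(k+1)\tau)$ for $k=0,1,\ldots,B/\tau - 1$ (using the assumption $B/\tau \in \mathbb{N}$). Let $n_k$ denote the number of indices $i$ with $x_i \in I_k$, so that $\sum_{k} n_k = L$. Since any two distinct points in the same $I_k$ satisfy $|x_i - x_j| < \tau$, every ordered pair of distinct indices landing in a common $I_k$ contributes to $\wedge$, giving the lower bound
$$
\wedge \;\geq\; \sum_{k} n_k(n_k-1) \;=\; \sum_k n_k^2 \;-\; L.
$$

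Next I would apply Cauchy–Schwarz (equivalently, convexity of $x \mapsto x^2$) to the $B/\tau$ nonnegative integers $n_k$ summing to $L$:
$$
\sum_k n_k^2 \;\geq\; \frac{L^2}{B/\tau} \;=\; \frac{L^2 \tau}{B}.
$$
Combining these two estimates yields $\wedge \geq L^2 \tau / B - L$.

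Finally I would substitute $L = (B/\tau + 1)\sigma$ and simplify. Setting $u := B/\tau \geq 2$ (this uses $\tau \leq B/2$) one computes
$$
\frac{L^2 \tau}{B} - L \;=\; \frac{(u+1)^2 \sigma^2}{u} - (u+1)\sigma \;=\; \frac{\sigma(u+1)\bigl(u(\sigma-1)+\sigma\bigr)}{u},
$$
and verifying the desired inequality $\wedge \geq \sigma^2 B/(2\tau) = u\sigma^2/2$ reduces to the elementary estimate
$$
u^2\!\left(\tfrac{\sigma}{2}-1\right) + u(2\sigma-1) + \sigma \;\geq\; 0,
$$
which holds for every $\sigma \geq 2$ and $u \geq 2$. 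This closes the proof. There is no real obstacle here: the only choice to make is the size of the partition cells, and length $\tau$ is exactly the right scale so that same-cell points automatically satisfy $|x_i-x_j|<\tau$ while Cauchy–Schwarz still delivers a quadratic lower bound on $\sum n_k^2$; the hypothesis $\sigma \geq 2$ is used precisely to absorb the linear correction $-L$.
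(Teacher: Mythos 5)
Your proof is correct, and it takes a genuinely different route from the paper's. The paper's argument is a one-line extremal claim: it asserts that $\wedge$ is minimized when the $L$ points sit in $\sigma$-fold clusters at the locations $j\tau$, $j=0,1,\ldots,B/\tau$, and then evaluates $\wedge = \sigma(\sigma-1)\cdot\frac{B}{\tau} \geq \frac{\sigma^2 B}{2\tau}$ for that configuration, using $\sigma-1\geq\sigma/2$. The optimality of that configuration is declared ``obvious'' and never justified, so the paper's proof is really a sketch. Your version --- partitioning $[0,B)$ into $B/\tau$ cells of length $\tau$, bounding $\wedge \geq \sum_k n_k(n_k-1)$, and applying Cauchy--Schwarz to get $\sum_k n_k^2 \geq L^2\tau/B$ --- replaces that unproved extremal claim with a complete, self-contained argument; the algebra at the end (reducing to $u^2(\tfrac{\sigma}{2}-1)+u(2\sigma-1)+\sigma\geq 0$ with $u=B/\tau$) checks out, and you correctly isolate $\sigma\geq 2$ as the hypothesis that absorbs the linear term $-L$, exactly the role it plays in the paper's $\sigma-1\geq\sigma/2$ step. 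What the paper's approach buys is brevity; what yours buys is rigor, at the cost of a short computation. Both arrive at the same constant.
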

\begin{proof}
It is obvious that $\wedge$ becomes minimal if the $L$ points are distributed in the following way:\\
$\sigma$ times the point $j \cdot \tau$ for $j=0,1,\ldots, \frac{B}{\tau}$. For this distribution we have \\
$\wedge = \sigma \left(\sigma-1\right)~\cdot~\frac{B}{\tau}~\geq~\frac{\sigma^{2}B}{2 \tau}$.
\end{proof}
\begin{lemma} \label{lem_dos}
Let $\mathcal{B}_{0}, \mathcal{B}_{1}, \ldots, \mathcal{B}_{q-1}$ be $q$ intervals of length $B$ each. Let $0 < \tau \leq \frac{B}{2}$. Assume there are $L_{i}$ points $x^{(i)}_{1}, \ldots, x^{(i)}_{L_{i}}$ in $\mathcal{B}_{i}$ for $i=0,\ldots, q-1$ with $L_{0} + \ldots + L_{q-1} = \left(\frac{B}{\tau} + 1\right) \cdot q \cdot \psi$, with $\psi \geq 3$. Then
$$
\tilde{\wedge} := \sum^{q-1}_{i=0} \# \left\{1 \leq k \neq l \leq L_{i}: \left|x^{(i)}_{k} - x^{(i)}_{l} \right| < \tau\right\} \geq \frac{B q}{2 \tau} \left(\psi -2\right)^{2}.
$$
\end{lemma}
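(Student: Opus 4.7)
My plan is a direct convexity / double-counting argument; going through Lemma~\ref{lem_uno} interval-by-interval would also work (by splitting the $q$ intervals into ``heavy'' ones with $L_i \geq 2M$ and ``light'' ones whose total mass is at most $2Mq$, then applying Cauchy--Schwarz to the $\sigma_i = \lfloor L_i/M\rfloor$), but that route costs a constant factor. Set $M := B/\tau + 1$, so the hypothesis becomes $\sum_{i=0}^{q-1} L_i = Mq\psi$.

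For each $\mathcal{B}_i$ I would partition the interval into at most $M$ consecutive sub-intervals (``buckets'') of length at most $\tau$ — concretely, $\lfloor B/\tau\rfloor$ buckets of length exactly $\tau$ plus at most one shorter tail. Let $k_{i,j}$ denote the number of the $L_i$ points lying in the $j$-th bucket of $\mathcal{B}_i$. Since any two distinct points in a common bucket are at distance strictly less than $\tau$, the contribution of $\mathcal{B}_i$ to $\tilde{\wedge}$ (ordered pairs) is at least $\sum_j k_{i,j}(k_{i,j}-1)$, and Jensen's inequality applied to the $M$ bucket counts (whose sum is $L_i$) gives
\begin{equation*}
\sum_j k_{i,j}(k_{i,j}-1) \;=\; \sum_j k_{i,j}^2 - L_i \;\geq\; \frac{L_i^2}{M} - L_i.
\end{equation*}
Summing over $i$ and applying Cauchy--Schwarz across the $q$ intervals yields
\begin{equation*}
\tilde{\wedge} \;\geq\; \frac{1}{M}\sum_{i=0}^{q-1}L_i^2 \;-\; \sum_{i=0}^{q-1}L_i \;\geq\; \frac{(Mq\psi)^2}{Mq} - Mq\psi \;=\; Mq\psi(\psi-1).
\end{equation*}

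To finish, I would combine $M \geq B/\tau$ with the elementary inequality $2\psi(\psi-1) \geq (\psi-2)^2$, which rearranges to $\psi^2 + 2\psi - 4 \geq 0$ and hence holds for all $\psi \geq -1+\sqrt{5}$, in particular for $\psi \geq 3$; this delivers the claimed bound $\tilde{\wedge} \geq Bq(\psi-2)^2/(2\tau)$. The only genuine obstacle is the bucket bookkeeping when $B/\tau$ is not an integer — one has to verify that the partition really uses at most $M = B/\tau+1$ pieces so that Jensen produces $L_i^2/M$ rather than $L_i^2$ divided by something larger — and this is where the hypothesis $\tau \leq B/2$ is used implicitly, guaranteeing that $M$ is comparable to $B/\tau$ and that the bound $L_i^2/M - L_i$ is nontrivial whenever $L_i > M$.
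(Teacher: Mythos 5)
Your proof is correct, but it takes a genuinely different route from the paper's. The paper reduces Lemma~\ref{lem_dos} to Lemma~\ref{lem_uno}: it writes $L_i=(\tfrac{B}{\tau}+1)\tilde\sigma_i$, rounds $\tilde\sigma_i$ down to an integer $\sigma_i$ (setting it to $0$ when $\tilde\sigma_i<2$), notes $\sum_i\sigma_i\ge q(\psi-2)$, and then applies Lemma~\ref{lem_uno} to each interval together with Cauchy--Schwarz on the $\sigma_i$; the two units lost in the rounding are exactly the source of the $(\psi-2)^2$ in the statement. You instead bypass Lemma~\ref{lem_uno} entirely: you bucket each $\mathcal{B}_i$ into at most $M=\tfrac{B}{\tau}+1$ pieces of length at most $\tau$, apply Jensen within each interval to get $\sum_j k_{i,j}(k_{i,j}-1)\ge L_i^2/M-L_i$ (an inequality that is valid for every $i$ regardless of sign, so no case split on small $L_i$ is needed), and then Cauchy--Schwarz across intervals to get the stronger bound $\tilde\wedge\ge Mq\psi(\psi-1)$, from which the stated bound follows via $2\psi(\psi-1)\ge(\psi-2)^2$. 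Your version buys three things: it avoids the integrality/rounding bookkeeping, it handles non-integer $B/\tau$ (which Lemma~\ref{lem_uno} explicitly assumes away), and it replaces the paper's ``it is obvious that $\wedge$ becomes minimal if\dots'' extremal-configuration claim in Lemma~\ref{lem_uno} with a rigorous convexity argument; the cost is only that you must verify the elementary inequality $\psi^2+2\psi-4\ge0$ at the end. One small inaccuracy: the hypothesis $\tau\le B/2$ is not really what makes your argument work --- you only ever use $M\ge B/\tau$, which is automatic --- so your closing remark about where that hypothesis enters could simply be dropped.
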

\begin{proof}
Let $L_{i} := \left(\frac{B}{\tau} +1\right) \cdot \tilde{\sigma}_{i}$. Let $\sigma_{i} := \begin{cases} 0 & \text{if}~\tilde{\sigma}_{i} < 2 \\ \left[\tilde{\sigma}_{i}\right] & \text{if}~\tilde{\sigma}_{i} \geq 2 \end{cases}$\\
and let $L'_{i} := \left(\frac{B}{\tau}+1\right)\cdot \sigma_{i}$. Then $L_{1}' + \ldots + L_{q}' \geq \left(\frac{B}{\tau}+1\right) q \cdot \left(\psi -2\right)$, that is $\sigma_{1} + \ldots + \sigma_{q} \geq q \cdot \left(\psi -2\right)$. By Lemma~\ref{lem_uno} we have
$$
\tilde{\wedge} \geq \sum^{q}_{i=1} \frac{\sigma^{2}_{i} \cdot B}{2 \tau} \geq \frac{B}{2 \tau} \sum^{q}_{i=1} \left(\psi-2\right)^{2} = \frac{B q}{2 \tau} \cdot \left(\psi-2\right)^{2}.
$$
\end{proof}
\begin{lemma} \label{lem_tres}
Let $\alpha \in \left(0,1\right)$ be given. For A (large enough) let $\Delta_{1}, \ldots, \Delta_{A \alpha-1}$ be $A \alpha-1$ positive integers with $\sum^{\alpha A-1}_{i=1} \Delta_{i} \leq A$. Then there exist at least $\frac{\alpha^{2} A}{4}$ elements $\Delta_{i}$ attaining the same value $\Delta$ with $\Delta \leq \frac{4}{\alpha^{2}}$.
\end{lemma}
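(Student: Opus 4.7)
The statement is a simple averaging-plus-pigeonhole claim: a collection of $\alpha A - 1$ positive integers with sum at most $A$ has average about $1/\alpha$, so many of them are small, and then among those the pigeonhole principle forces a common value. The plan is to realize this in two steps by picking a threshold $T$ of order $1/\alpha$.

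\emph{Step 1 (threshold).} I would first observe that any $\Delta_i$ exceeding $T$ contributes more than $T$ to the sum, so the constraint $\sum_i \Delta_i \leq A$ gives $\#\{i:\Delta_i > T\} < A/T$. Consequently
$$
\#\{i:\Delta_i \leq T\} \;>\; (\alpha A-1) - A/T.
$$
\emph{Step 2 (pigeonhole).} The $\Delta_i$ counted above are positive integers in $\{1,2,\dots,\lfloor T\rfloor\}$, so some value $\Delta \in \{1,\dots,\lfloor T\rfloor\}$ is attained by at least
$$
\frac{(\alpha A - 1) - A/T}{\lfloor T \rfloor}
$$
of the $\Delta_i$. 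Taking $T = 2/\alpha$ makes the numerator $\alpha A/2 - 1$ and the denominator at most $2/\alpha$, yielding a count of roughly $\alpha^2 A/4$. Since $\alpha \in (0,1)$ we have $T = 2/\alpha \leq 4/\alpha^2$, so the resulting common value satisfies $\Delta \leq 4/\alpha^2$ as required.

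The only subtle point is producing the constant $\alpha^2 A/4$ cleanly: the naive computation gives $\alpha^2 A/4 - \alpha/2$, which falls just short. This will be absorbed either by the clause ``for $A$ large enough'' in the hypothesis, or by a mild tightening of the pigeonhole (exploiting that $\lfloor 2/\alpha \rfloor$ is typically strictly less than $2/\alpha$, so $1/\lfloor 2/\alpha\rfloor > \alpha/2$). I do not expect any real obstacle; the lemma is essentially a clean Markov-type bound combined with pigeonhole, and the weakness of the stated bound $\Delta \leq 4/\alpha^2$ relative to the sharper $\Delta \leq 2/\alpha$ suggests that the author is leaving themselves room for exactly this kind of rounding.
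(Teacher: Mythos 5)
The paper offers no proof to compare against: its ``proof'' of Lemma~\ref{lem_tres} consists of the single sentence that this is an easy exercise left to the reader. Your Markov-plus-pigeonhole argument with threshold $T=2/\alpha$ is certainly the intended one, and it is correct in substance; the only issue is the loose end you yourself flag, and neither of your two proposed patches fully closes it. The clause ``for $A$ large enough'' cannot help on its own, because the deficit $\alpha/2$ in your bound $\frac{\alpha^2A}{4}-\frac{\alpha}{2}$ is independent of $A$, so it never becomes negligible relative to the \emph{required} count $\frac{\alpha^2A}{4}$ no matter how large $A$ is. Your second patch (the strict gain from $\lfloor 2/\alpha\rfloor<2/\alpha$) does work, but only when $2/\alpha\notin\mathbb{Z}$; when $2/\alpha$ is an integer it gives nothing, and that case is not covered.

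The missing observation that closes the gap for every $\alpha$ is to use the integrality of the $\Delta_i$ in Step~1 as well: writing $k:=\lfloor 2/\alpha\rfloor$, any $\Delta_i>k$ satisfies $\Delta_i\ge k+1$, so $\#\{i:\Delta_i>k\}\le A/(k+1)$ and the pigeonhole yields a value attained at least
$$
\frac{(\alpha A-1)-\frac{A}{k+1}}{k}=\left(\frac{\alpha-\frac{1}{k+1}}{k}\right)A-\frac{1}{k}
$$
times. Since $k\le 2/\alpha<k+1$, one always has $\frac{1}{k+1}<\frac{\alpha}{2}$ or $\frac{1}{k}>\frac{\alpha}{2}$ with at least one inequality strict, so the coefficient of $A$ strictly exceeds $\frac{\alpha}{2}\cdot\frac{\alpha}{2}=\frac{\alpha^2}{4}$; now ``$A$ large enough'' legitimately absorbs the additive $-\frac{1}{k}$. (Alternatively, one can avoid thresholds entirely: if every value $v\le 4/\alpha^2$ were attained fewer than $\mu=\frac{\alpha^2A}{4}$ times, the minimal possible sum would be forced to be roughly $\mu\cdot\frac{1}{2}(n/\mu)^2\approx 2A>A$, a contradiction; this also shows the constant $\frac14$ in the lemma is not sharp, consistent with the generous bound $\Delta\le 4/\alpha^2$.) With either repair your proof is complete; as written it has a genuine, though small and easily fixed, uncovered case.
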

\begin{proof}
This is an easy exercise and is left to the reader.
\phantom\qedhere
\end{proof}

\section{Proof of the Theorem}

Before we proceed, we repeat: We study the pair correlation of $\left(\left\{a_{n} \alpha\right\}\right)_{n=1,\ldots, N}$. We have
$$
S_{\alpha} = \left(\left\{a_{n_{j}} \alpha\right\}\right)_{j=1,\ldots, W} \subseteq  \left(\left\{a_{n} \alpha\right\}\right)_{n=1,\ldots, N}
$$
and
$$
S_{\alpha} \subseteq \bar{S}_{\alpha} = \left(\left\{j \alpha\right\}\right)_{j=1,\ldots, M}.
$$
Further $W = \gamma \cdot N$ and $M = \Gamma \cdot N$. By $L_{p}$ we denote the number of elements of $S_{\alpha}$ lying in the interval $\left[\left.\frac{p}{q}, \frac{p+1}{q}\right.\right)$. Note that $L_{0} + \ldots + L_{q-1} = W$.\\

Now we distinguish 4 cases. (Indeed, the 4 cases cover all possible situations.)\\

\begin{case}
\begin{equation} \label{equ_tres}
b \delta \geq \frac{2^{7}}{c} \cdot \frac{1}{N}
\end{equation}
and
\begin{equation} \label{equ_cuatro}
\frac{a}{b} \geq \frac{2^{23}}{c^{2}}.
\end{equation}
This means: The length of the bundle of points from $\bar{S}_{\alpha}$ in an interval $\left[ \left. \frac{p}{q}, \frac{p+1}{q}\right.\right)$ is significantly larger than $\frac{1}{N}$ and $b$ is significantly smaller than $a$.
\end{case}
Then we choose $s:= \frac{2^{6}}{c}$. By \eqref{equ_tres} we have $\frac{s}{N} \leq \frac{b \delta}{2}$. Let $\mathcal{B}_{p}$ be the smallest interval containing the bundle of points from $\bar{S}_{\alpha}$ in an interval $\left[\left. \frac{p}{q}, \frac{p+1}{q}\right.\right)$. All $\mathcal{B}_{p}$ have the same length $B:= b \delta$. We count pairs of points from $S_{\alpha}$ with distance at most $\tau := \frac{s}{N}$. We have 
\begin{equation} \label{equ_seis}
\tau \leq \frac{B}{2}.
\end{equation}
Let $L_{p}$ denote the number of points from $S_{\alpha}$ contained in $\mathcal{B}_{p}$. Then by \eqref{equ_uno}, \eqref{equ_seis} and \eqref{equ_cuatro} we have:
\begin{eqnarray*}
L_{0} + \ldots + L_{q-1}  & = & W  \geq c  N \geq c \frac{b q}{K} = \frac{B}{\tau}  q  \left(\frac{\tau}{B}  \frac{c b}{K}\right) =\\
& = & \frac{B}{\tau} q \left(\frac{s}{N b \delta } \frac{c b}{K}\right)\geq \frac{B}{\tau} q \left(\frac{s a q}{\frac{8 b q}{K}} \frac{c}{K}\right) =\\
& = & \frac{B}{\tau} q \left(s \frac{a}{b} \frac{c}{8}\right) = \frac{B}{\tau} q \left(8 \frac{a}{b}\right) \geq \left(\frac{B}{\tau} + 1\right) q \left(\frac{1}{4} \frac{a}{b}\right).
\end{eqnarray*}
So we can apply Lemma~\ref{lem_dos} with $\psi = \frac{1}{4} \frac{a}{b} \geq \frac{2^{21}}{c^{2}} \geq 4$, and we obtain (using \eqref{equ_q}, and \eqref{equ_cuatro})
\begin{eqnarray*}
R_{N} (s) & := & \frac{1}{N} \cdot \# \left\{1 \leq i \neq j \leq N \left| \left\|\left\{a_{i} \alpha\right\}-\left\{a_{j} \alpha\right\}\right\| \leq \frac{s}{N}\right.\right\} \geq \\
& \geq & \frac{1}{N} \cdot \# \left\{1 \leq i \neq j \leq W \left| \left\|\left\{a_{n_{i}} \alpha\right\} - \left\{a_{n_{j}} \alpha \right\}\right\|\leq \frac{s}{N}\right.\right\} \geq \\
& \geq & \frac{1}{N} \frac{B q}{2 \tau} \left(\psi -2\right)^{2} \geq \frac{ b \delta q}{2 s} \left(\frac{a}{8 b}\right)^{2} \geq \\
& \geq & \frac{b}{6 a s} \left(\frac{a}{8 b}\right)^{2} = s \frac{1}{s^{2}} \frac{a}{b} \frac{1}{6 \cdot 2^{6}} = \\
& = & s \frac{c^{2}}{2^{12}} \frac{1}{6 \cdot 2^{6}} \frac{a}{b} \geq 4 s.
\end{eqnarray*}
\begin{case}
\begin{equation} \label{equ_siete}
\frac{a}{b} \leq \frac{2^{23}}{c^{2}}
\end{equation}
and
\begin{equation} \label{equ_ocho}
\frac{1}{N} < \frac{c^{5}}{K \cdot 2^{23}} \cdot \frac{1}{q}
\end{equation}
\end{case}
Condition \eqref{equ_siete} implies
\begin{equation} \label{equ_nueve}
b \geq \frac{c^{2}}{2^{23}} a
\end{equation}
and \eqref{equ_ocho} by \eqref{equ_uno} implies
\begin{equation} \label{equ_diez}
b > \frac{K^{2} \cdot 2^{26}}{c^{5}}.
\end{equation}
There exist at least $\frac{q c}{2 K}$ intervals $\left[\left. \frac{p}{q}, \frac{p+1}{q}\right.\right)$ which contain at least $\frac{b c}{2K}$ points from $S_{\alpha}$.\\
(Otherwise we had $\left|S_{\alpha}\right| \leq \frac{q c}{2 K} \cdot b + \left(q - \frac{q c}{2 K}\right) \cdot \frac{bc}{2 K} < \frac{c b q}{K} \leq c \cdot N \leq W$, a contradiction.)\\
We denote the elements of $\left[ \left. \frac{p}{q}, \frac{p+1}{q}\right.\right) \cap S_{\alpha}$ with $\beta_{1}^{\left(p\right)} < \ldots < \beta_{W_{p}}^{\left(p\right)}$. We consider the set of ``normalized'' differences
$$
\frac{\beta^{\left(p\right)}_{j+1} - \beta^{\left(p\right)}_{j}}{\delta} \quad\quad \text{for}~j=1, \ldots, W_{p}-1, ~\text{and}~p=1, \ldots, q.
$$
These values are positive integers, we denote them by $\gamma_{1}, \ldots, \gamma_{Q}$, where $Q = W - q \geq \frac{c b q}{K} - q > \frac{c b q}{2 K}$ because of \eqref{equ_diez}. Further $\gamma_{1} + \ldots + \gamma_{Q} \leq b q$. By Lemma~\ref{lem_tres} therefore at least $\frac{c^{2}}{16 K^{2}} bq$ of the $\gamma_{i}$ must have a same value, say $\beta$, with
\begin{equation} \label{equ_once}
\beta \leq \frac{16 K^{2}}{c}.
\end{equation}
That implies: There exist at least 
\begin{equation} \label{equ_diecinueve}
\frac{c^{2}}{16 K^{2}} b q
\end{equation}
pairs of points of $S_{\alpha}$ with distance exactly $\beta \cdot \delta$. By \eqref{equ_once}, \eqref{equ_uno}, and \eqref{equ_nueve} we have
\begin{eqnarray*}
\beta \delta & \leq & \frac{16 K^{2}}{c} \delta \leq \frac{16 K^{2}}{c} \frac{1}{a q} = \frac{16 K^{2}}{c} \frac{1}{aq} N \frac{1}{N} \leq \\
& \leq & \frac{16 K^{2}}{c} \frac{1}{a q} \frac{8 b q}{K} \frac{1}{N} = \frac{2^{7}K}{c} \frac{b}{a} \frac{1}{N} \leq \frac{2^{7}K}{c} \frac{1}{N}
\end{eqnarray*}
and
\begin{eqnarray*}
\beta \delta & \geq & \delta \geq \frac{1}{3 a q} = \frac{N}{3 a q} \frac{1}{N} \geq \frac{b q}{3 a q K} \frac{1}{N} \geq \\
& \geq & \frac{1}{3 K} \frac{b}{a} \frac{1}{N} \geq \frac{c^{2}}{3 \cdot 2^{23} K} \frac{1}{N}.
\end{eqnarray*}
We choose now $s_{1} < s_{2}$ with $s_{2} - s_{1}$ very small such that $\frac{s_{1}}{N} < \beta \delta < \frac{s_{2}}{N}$. Thereby it will be crucial that $s_{1}, s_{2}$ are chosen from a finite set $\mathcal{D}$ which is defined depending on the ``universal'' constants $c, K$ only. We define $\mathcal{D}$ first:
$$
\mathcal{D} := \left\{\frac{c^{2}}{3 \cdot 2^{23} \cdot K} + j \cdot \frac{c^{2}}{2^{9} \cdot K} \left| j =0,1, \ldots, \frac{2^{16} \cdot K^{2}}{c^{3}} \right.\right\}.
$$
Then we find $s_{1}, s_{2} \in \mathcal{D}$ with $\frac{s_{1}}{N} < \beta\delta < \frac{s_{2}}{N}$, and 
\begin{equation} \label{equ_doce}
s_{2} = s_{1} + \frac{c^{2}}{2^{9} \cdot K}.
\end{equation}
Then by \eqref{equ_diecinueve} and \eqref{equ_doce} we have
$$
R_{N} \left(s_{2}\right)- R_{N} \left(s_{1}\right) \geq \frac{1}{N} \frac{c^{2}}{16 K^{2}} b q \geq \frac{K}{8 b q} \frac{c^{2}}{16 K^{2}} bq = \frac{c^{2}}{2^{7} K} = 4 \left(s_{2} - s_{1}\right).
$$
(Note, that $R_{N} \left(s_{2}\right) - R_{N} \left(s_{1}\right)$ should be approximately $2 \left(s_{2} - s_{1}\right) !$)
\begin{case} 
\begin{equation} \label{equ_trece}
b \delta \leq \frac{2^{7}}{c} \frac{1}{N}
\end{equation}
and
\begin{equation} \label{equ_catorce}
\frac{1}{N} < \frac{c^{5}}{K 2^{29}} \frac{1}{q}
\end{equation}
\end{case}
In this case we choose $s=1$. That is: We consider distances of points less or equal $\frac{1}{N}$, where this bound $\frac{1}{N}$ is significantly smaller than $\frac{1}{q}$, but is of order of the length of an interval $b \delta$ or larger. Condition \eqref{equ_catorce} and formula \eqref{equ_uno} imply $\frac{K}{8 b q} < \frac{1}{N} < \frac{c^{5}}{K \cdot 2^{29}} \frac{1}{q}$, hence
\begin{equation} \label{equ_quince}
b > \frac{2^{26}}{c^{5}} K^{L}.
\end{equation}
We recall that for every $p=0,\ldots, q-1$ we denote with $L_{p}$ the number of points of $S_{\alpha}$ in the interval $\mathcal{B}_{p}$. We have by \eqref{equ_uno}:
$$
L_{0} + L_{1} + \ldots + L_{q-1} = W \geq c N \geq c \frac{b q}{K}.
$$
Since $L_{p} \leq b$ always, we can conclude that at least $\frac{c}{2 K} q$ of the $L_{p}$ are at least $\frac{c}{2 K} b$. (This follows from $\frac{c}{2 K} q \cdot b + \left(q - \frac{c}{2 K} q\right) \cdot \frac{c}{2 K} b = c \cdot \frac{b q}{K} \cdot \left(\frac{1}{2} + \frac{1}{2} \left(1 - \frac{c}{2 K}\right)\right) < c \cdot \frac{b q}{K}\leq W.$)\\
We denote the at least $\frac{c}{2 K} q$ integers $p$ for which the $L_{p}$ are at least $\frac{c}{2 K} b$ by $p'$. We have $\sum_{p'} L p' \geq \frac{c^{2}}{4 K^{2}} bq$.

The interval $\mathcal{B}_{p'}$ has length $B = b \delta$. We divide $\mathcal{B}_{p'}$ in intervals of length $\frac{1}{N}$. The number of these intervals
$$
\left\lceil \frac{B}{\frac{1}{N}}\right\rceil = \left\lceil \frac{b \delta}{\frac{1}{N}}\right\rceil \leq \frac{2^{8}}{c} < \frac{1}{4} \cdot \frac{c}{2 K} b < \frac{1}{4} L_{p'}.
$$
(Here we used \eqref{equ_trece} and \eqref{equ_quince}.) Therefore each $\mathcal{B}_{p'}$ contains an interval I of length $\frac{1}{N}$ which contains at least $\left[\frac{L_{p'}}{2^{8}} \cdot c\right]$ elements, which is at least 4, hence
$$
\left[\frac{L_{p'}}{2^{8}} c\right] \geq \frac{L_{p'}}{\sqrt{2} \cdot 2^{8}} c \geq 2.
$$ 
This gives us
\begin{eqnarray*}
R_{N} (s) & \geq & \frac{1}{N} \sum_{p'} \left(\frac{L_{p'}}{\sqrt{2}\cdot 2^{8}} c\right) \left(\frac{L_{p'}}{\sqrt{2} \cdot 2^{8}} c -1\right) \geq \\
& \geq & \frac{1}{2 N} \sum_{p'} \left(\frac{L_{p'} c}{\sqrt{2}\cdot 2^{8}}\right)^{2} =\\
& = & \frac{1}{2 N} \frac{c^{2}}{2^{17}} \sum_{p'} \left(L_{p'}\right)^{2} \geq\\
& \geq & \frac{1}{2N} \frac{c^{2}}{2^{17}} \frac{c}{2K} q \left(\frac{c}{2K} b\right)^{2} =\\
& = & \frac{c^{5}}{2^{21}K^{3}} \frac{q b^{2}}{N} \geq \frac{c^{5}}{2^{24}K^{2}} b > 4
\end{eqnarray*}
by \eqref{equ_quince}. (Note, that this quantity should approach $2s=2$ !)\\

It remains to handle
\begin{case}
\begin{equation} \label{equ_dieciseis}
\frac{1}{N} \geq \frac{c^{5}}{K \cdot 2^{29}} \frac{1}{q}
\end{equation}
\end{case}
This condition because of \eqref{equ_uno} implies $b \leq \frac{K^{2} \cdot 2^{29}}{c^{5}}$. We recall the notion $P_{i}$ for $i=1,\ldots, b$ for the subset of $\bar{S_{\alpha}}$ consisting of the $i$-th elements from each bundle of $b$ points of $\bar{S_{\alpha}}$ from Section 2. For fixed $i$ we denoted the elements of $P_{i}$ by $v_{0} < v_{1} < \ldots < v_{q-1}$. At least one of these $b$ sets $P_{i}$, say $P$, contains at least $\frac{\gamma N}{b} \geq c \frac{q}{K}$ elements from $S_{\alpha}$. Note, that $S_{\alpha}$ has $W= \gamma  N \geq c  N$ elements. For simplicity of notation we assume that $c \frac{q}{K}$ is an integer in the following. Let $u_{1} < u_{2} < \ldots < u_{\frac{c q}{K}}$ be $\frac{c q}{K}$ elements from $S_{\alpha}\cap P$. Let $u_{i} \in \left[\left.\frac{p_{i}}{q}, \frac{p_{i}+1}{q}\right.\right)$, then $0 \leq p_{1} < p_{2} < \ldots < p_{\frac{c q}{K}} \leq q-1$, and by \eqref{equ_dos} we have $u_{i+1}-u_{i} \geq \frac{1}{2 q}$ always.\\

We consider the differences
$$
\Delta_{i} := p_{i+1} - p_{i} \quad \quad \text{for}~i=1, \ldots, \frac{c q}{K}-1.
$$
These are $\frac{cq}{K}-1$ positive integers with
$$
\sum^{\frac{cq}{K}-1}_{i=1} \Delta_{i} = p_{\frac{cq}{K}} - p_{1} \leq q-1.
$$
Hence -- by Lemma~\ref{lem_tres} -- at least $\frac{c^{2}q}{4 K^{2}}$ (for $q$ large enough) of them must have the same value $p \leq \frac{4 K^{2}}{c^{2}}$. Choose $\frac{c^{2}q}{4 K^{2}}$ such $\Delta_{i}$ (for simplicity of notation we assume $\frac{c^{2}q}{8 K^{2}}$ to be an integer) with the same value $p \leq \frac{4 K^{2}}{c^{2}}$, say 
$$
\Delta_{i_{1}}, \Delta_{i_{2}}, \ldots, \Delta_{i_{\frac{c^{2}q}{4 K^{2}}}} \text{with}~1 \leq i_{1} < i_{2} < \ldots < i_{\frac{c^{2}q}{4 K^{2}}} \leq \frac{c q}{K}-1.
$$ 
Consider the distances of the corresponding points of $S_{\alpha} \cap P$, i.e.,
$$
u_{i_{j}+1} - u_{i_{j}} \quad \quad \text{for}~j=1, \ldots, \frac{c^{2}q}{4 K^{2}}.
$$
These distances by \eqref{equ_cinco} attain at most two different values. Hence at least 
\begin{equation} \label{equ_one}
\frac{c^{2}q}{8 K^{2}}
\end{equation}
of $u_{i_{j}+1} - u_{i_{j}}$ have the same value, say $\kappa$, with
$$
\kappa = \frac{p}{q} + \frac{\tau}{a q},
$$ 
with a fixed $p$ with $1 \leq p \leq \frac{4 K^{2}}{c^{2}}$, and a real $\tau$ with $\left|\tau\right| \leq 1$. Moreover (see \eqref{equ_dos}) we have $\kappa \geq \frac{1}{2 q}$. So (by \eqref{equ_uno} and \eqref{equ_dieciseis})
\begin{equation} \label{equ_diecisiete}
\frac{1}{2 K} \frac{1}{N} \leq \frac{1}{2 q} \leq \kappa \leq \frac{2 K}{q} + \frac{1}{a q} \leq \frac{4 K^{2}}{c^{2}q} \leq \frac{1}{N} \frac{2^{32} \cdot K^{2} \left(4 K^{2}\right)}{c^{7}}.
\end{equation}
We choose now $s_{1} < s_{2}$ with $s_{2} -s_{1}$ very small such that $\frac{s_{1}}{N} < \kappa < \frac{s_{2}}{N}$. Thereby it will be crucial that $s_{1}, s_{2}$ are chosen from a finite set $\mathcal{E}$ which is defined depending on the ``universal'' constants $c,K$ only. We define $\mathcal{E}$ first:
$$
\mathcal{E} := \left\{\frac{1}{2 K} + j \cdot \frac{c^{7}}{K^{3} \cdot 2^{34}} \left| j=0, \ldots, \left\lceil \frac{\left(4 K^{2}\right) K^{5}\cdot 2^{66}}{c^{14}}\right\rceil\right. \right\}.
$$
Then we find $s_{1}, s_{2} \in \mathcal{E}$ with
$$
\frac{s_{1}}{N} < \kappa < \frac{s_{2}}{N}
$$
and
\begin{equation} \label{equ_dieciocho}
s_{2} = s_{1} + \frac{c^{7}}{K^{3} \cdot 2^{34}}.
\end{equation}
Then we have by \eqref{equ_dieciseis}, \eqref{equ_one} and \eqref{equ_dieciocho}:  
$$
R_{N} \left(s_{2}\right) - R_{N} \left(s_{1}\right) \geq \frac{1}{N} \frac{c^{2}q}{8 K^{2}} \geq \frac{c^{7}}{K^{3}\cdot 2^{32}} = 4 \left(s_{2}-s_{1}\right).
$$
(Note, that $R_{N}\left(s_{2}\right)-R_{N}\left(s_{1}\right)$ should be approximately $2 \left(s_{2}-s_{1}\right)$ !)\\

Putting everything together: We can find some $s$, either $s=\frac{2^{6}}{c}$, or $s=1$, or $s$ from $\mathcal{D}$ or $s$ from $\mathcal{E}$ such that 
$$
\lim_{N \rightarrow \infty} R_{N} \left(s\right) = 2s
$$
cannot hold.
\section*{Acknowledgement}
Many thanks to Sam Chow who made me aware of a gap in the first version of the proof.
 
\end{proof}

\end{document}